\renewcommand*\env@matrix[1][*\c@MaxMatrixCols c]{%
  \hskip -\arraycolsep
  \let\@ifnextchar\new@ifnextchar
  \array{#1}}
\newtheorem{proposition}{Proposition}[section]
\newtheorem{theorem}{Theorem}[section]
\theoremstyle{definition}
\newtheorem{definition}{Definition}[section]
\newtheorem{remark}{Remark}[section]
\newtheorem{example}{Example}[section]
\newtheorem{notation}{Notation}[section]
\title{Lattice associated to a Shi variety}
\author{Nathan Chapelier-Laget}
\begin{document}

\maketitle

\begin{abstract}
Let $W$ be an irreducible Weyl group and $W_a$ its affine Weyl group. In \cite{NC1} the author defined an affine variety $\widehat{X}_{W_a}$, called the Shi variety of $W_a$, whose integral points are in bijection with $W_a$. The set of irreducible components of $\widehat{X}_{W_a}$, denoted $H^0(\widehat{X}_{W_a})$, is of some interest and we show in this article that  $H^0(\widehat{X}_{W_a})$ has a structure of a semidistributive lattice.
\end{abstract}

{
  \hypersetup{linkcolor=blue}
  \tableofcontents
}

\section{Introduction}
Let $V$ be a Euclidean space with inner product $( -, -)$. Let $\Phi$ be an irreducible crystallographic root system in $V$ with simple system $\Delta=\{\alpha_1, \dots, \alpha_n\}$. We set $m=|\Phi^+|$.  From now on, when we say \say{root system} it will always mean irreducible crystallographic root system. 

Let $W$ be the \emph{Weyl group} associated to $\mathbb{Z}\Phi$, that is the maximal (for inclusion) reflection subgroup of $O(V)$ admitting $\mathbb{Z}\Phi$ as a $W$-equivariant lattice.  
We identify $\mathbb{Z}\Phi$ and the group of its associated translations and we denote by $\tau_x$ the translation corresponding to $x \in \mathbb{Z}\Phi$.

 Let $k \in \mathbb{Z}$ and $\alpha \in \Phi$. Define the affine reflection $s_{\alpha,k} \in \text{Aff}(V)$ by $s_{\alpha,k}(x)=x-(2\frac{( \alpha, x )}{( \alpha, \alpha )}-k)\alpha$. We consider the subgroup $W_a $ of Aff($V$) generated by all affine reflections $s_{\alpha,k}$ with $\alpha \in \Phi$ and $k \in \mathbb{Z}$, that is 
$$
W_a := \langle s_{\alpha,k}~|~\alpha \in \Phi, ~k \in \mathbb{Z \rangle}.
$$
 The group $W_a$ is called the \emph{affine Weyl group} associated to $\Phi$. 
 
 Let $\alpha \in \Phi$ such that  $\alpha = a_1\alpha_1 + \cdots + a_n\alpha_n$ with $a_i \in \mathbb{Z}$. The height of $\alpha$ (with respect to $\Delta$) is defined by the number $h(\alpha) = a_1 + \cdots+ a_n$. We denote by $-\alpha_0$ the \emph{highest short root} of $\Phi$.
 
The set $ S_a := \{s_{\alpha_1},\dots,s_{\alpha_n}\} \cup \{s_{-\alpha_0,1}\} $  is a set of Coxeter generators of $W_a$. For short we will write $S_a = \{ s_0, s_1, \dots s_n\}$ where $s_0 := s_{-\alpha_0,1}$ and $s_i = s_{\alpha_i}$ for $i =1,\dots, n$.

It is also well known that $W_a = \mathbb{Z}\Phi \rtimes W$. Therefore, any element $w \in W_a$ decomposes as $w=\tau_x\overline{w}$ where $x \in \mathbb{Z}\Phi$ and $\overline{w} \in W$. The element $\overline{w}$  is called the \emph{finite part} of $w$.

Let $\alpha \in \Phi$ and $\alpha^{\vee}:= \frac{2\alpha}{( \alpha, \alpha )}$. For any $k \in \mathbb{Z}$ and any $m \in \mathbb{R}$, we set the hyperplanes 
\begin{align*}
H_{\alpha,k} &= \{x \in V~|~s_{\alpha,k}(x)=x \} \\
& = \{ x \in V~|~ ( x, \alpha^{\vee} ) = k\},
\end{align*}

\noindent the strips
\begin{align*}
H_{\alpha,k}^m & = \{x \in V~|~k < ( x ,\alpha^{\vee} ) < k+m \}.
\end{align*}
 
The collection of hyperplanes $H_{\alpha,k}$ is denoted by $\mathcal{H}(\Phi)$ or just $\mathcal{H}$ if there is no possible confusion. The fundamental polytope  $P_{\mathcal{H}}$  is defined by
 $$
 P_{\mathcal{H}} := \bigcap\limits_{\alpha \in \Delta}H_{\alpha,0}^1.
$$

An alcove of $V$ is by definition  a connected component of
$$
 V ~\backslash \bigcup\limits_{\begin{subarray}{c}
 ~ ~\alpha \in \Phi^{+} \\ 
  k \in \mathbb{Z}
\end{subarray}}
H_{\alpha,k}.
$$

We denote by $A_e$ the alcove $A_e = \bigcap_{\alpha \in \Phi^+} H_{\alpha,0}^1$. It turns out that $W_a$ acts regularly on the set of alcoves. Therefore we have a bijective correspondence between the elements of $W_a$ and all the alcoves. This bijection is defined by $w \mapsto A_w$ where $A_w := wA_e$. We call $A_w$ the corresponding alcove associated to $w \in W_a$. Any alcove of $V$ can be written as an intersection of special strips, that is there exists a $\Phi^+$-tuple of integers $(k(w,\alpha))_{\alpha \in \Phi^+}$ such that 
$$
A_w = \bigcap\limits_{\alpha \in \Phi^+}H_{\alpha, k(w,\alpha)}^1.
$$

\begin{definition}\label{def special point}
A point $x \in V$ is called special if $\text{Stab}_{W_a}(x)$ is isomorphic to $W$. Intuitively this notion embodies the points in $V$ that have the same geometry in their neighbourhood as the point 0.
\end{definition}

Proposition 10.17 of \cite{AB} tells us that such points exist. Moreover,  there exists a useful characterisation of these points:

\begin{proposition}[\cite{AB}, Proposition 10.19]\label{prop special point}
A point $x \in V$ is special if and only if every hyperplane in $\mathcal{H}$ is parallel to a hyperplane passing through  x.
\end{proposition}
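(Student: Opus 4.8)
The plan is to translate both sides of the equivalence into a single arithmetic condition on $x$, namely that $( x, \alpha^{\vee} ) \in \mathbb{Z}$ for every $\alpha \in \Phi$, and to show that each side is equivalent to it. Set $\Phi_x := \{\alpha \in \Phi \mid ( x, \alpha^{\vee} ) \in \mathbb{Z}\}$. The geometric side is the easy translation: since $x \in H_{\alpha,k}$ exactly when $( x, \alpha^{\vee} ) = k$, the hyperplane of $\mathcal{H}$ through $x$ parallel to a given $H_{\alpha,k}$ exists if and only if $( x, \alpha^{\vee} ) \in \mathbb{Z}$; as $\alpha$ ranges over $\Phi$ this says precisely that $\Phi_x = \Phi$.

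For the group-theoretic side, first I would invoke the standard fact (available through the cited \cite{AB} machinery) that the stabilizer of a point in an affine reflection group is generated by the affine reflections fixing it. Thus $\text{Stab}_{W_a}(x) = \langle s_{\alpha,k} \mid x \in H_{\alpha,k}\rangle = \langle s_{\alpha, ( x, \alpha^{\vee} )} \mid \alpha \in \Phi_x\rangle$. Next I would use the semidirect product decomposition $W_a = \mathbb{Z}\Phi \rtimes W$ to consider the finite-part projection $p\colon W_a \to W$ that sends $\tau_v u$ to $u$ for $v \in \mathbb{Z}\Phi$ and $u \in W$. Restricted to $\text{Stab}_{W_a}(x)$ this map is injective, because its kernel consists of translations fixing $x$ and only the trivial translation has a fixed point; since $p(s_{\alpha,k}) = s_{\alpha}$, its image is the reflection subgroup $W_{\Phi_x} := \langle s_\alpha \mid \alpha \in \Phi_x\rangle$. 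Hence $\text{Stab}_{W_a}(x) \cong W_{\Phi_x}$.

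It then remains to show $W_{\Phi_x} \cong W$ if and only if $\Phi_x = \Phi$. The direction from right to left is immediate. For the converse, an abstract isomorphism forces $|W_{\Phi_x}| = |W|$, and since $W_{\Phi_x}$ is a subgroup of $W$ this gives the equality $W_{\Phi_x} = W$. To upgrade this equality of groups to the equality $\Phi_x = \Phi$ of root sets, I would check that $\Phi_x$ is closed under the reflections it contains: for $\alpha,\beta \in \Phi_x$ one computes $( x, s_\alpha(\beta)^{\vee} ) = ( x, \beta^{\vee} ) - ( x, \alpha^{\vee} )( \alpha, \beta^{\vee} )$, which lies in $\mathbb{Z}$ because the two inner products with $x$ are integers by hypothesis and $( \alpha, \beta^{\vee} )$ is an integer Cartan number as $\Phi$ is crystallographic. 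Consequently $\Phi_x$ is a root subsystem whose associated reflection group $W_{\Phi_x}$ has reflection set exactly $\{s_\alpha \mid \alpha \in \Phi_x\}$; equating this with the reflection set $\{s_\alpha \mid \alpha \in \Phi\}$ of $W$ yields $\Phi_x = \Phi$, which is the desired condition.

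I expect the main obstacle to be the first step of the group-theoretic side: pinning down precisely that $\text{Stab}_{W_a}(x)$ is \emph{generated} by the reflections it contains rather than merely containing them, and being careful that the abstract isomorphism to $W$ (not just an inclusion) is what forces the order comparison that closes the argument. Everything else is routine once the reflection-closedness computation is in hand.
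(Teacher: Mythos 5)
The paper does not prove this statement: it is imported verbatim from \cite{AB} (Proposition 10.19), so there is no internal proof to compare against. Your argument is correct and is essentially the standard one. The reduction of the geometric condition to $(x,\alpha^{\vee})\in\mathbb{Z}$ for all $\alpha\in\Phi$ is exactly right, the formula $(x,s_{\alpha}(\beta)^{\vee})=(x,\beta^{\vee})-(x,\alpha^{\vee})(\alpha,\beta^{\vee})$ checks out (using $s_{\alpha}(\beta)^{\vee}=\beta^{\vee}-(\alpha,\beta^{\vee})\alpha^{\vee}$), and the injectivity of the finite-part projection on the stabilizer is the right way to identify $\mathrm{Stab}_{W_a}(x)$ with $W_{\Phi_x}$. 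The argument does lean on two external facts, which you correctly flag: that the point stabilizer in $W_a$ is \emph{generated} by the affine reflections fixing the point (this is the real content of the \cite{AB} machinery, a consequence of simple transitivity on alcoves), and that the set of reflections of the reflection subgroup generated by a closed subsystem $\Phi_x$ is exactly $\{s_{\alpha}\mid\alpha\in\Phi_x\}$ (Deodhar--Dyer; needed so that $W_{\Phi_x}=W$ forces $\Phi_x=\Phi$ rather than merely $|W_{\Phi_x}|=|W|$). With those two citations made precise, the proof is complete; note also that the paper's later proof of Proposition \ref{unique x} runs the same kind of argument in the special case $x=\bigcap_{\alpha\in\Delta}H_{\alpha,1}$, so your proof is consistent with how the result is actually used downstream.
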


In \cite{JYS1} Jian-Yi Shi shows that the $ \Phi^+$-tuple of integers $(k(w,\alpha))_{\alpha \in \Phi^+}$ subject to certain conditions characterizes entirely $w$ (we recall the details of this characterization in Section \ref{Section admitted}, which we refer to as the Shi's characterization). Built on this characterization, the author defined in \cite{NC1} an affine variety $\widehat{X}_{W_a}$, called the Shi variety of $W_a$, whose integral points are in bijection with $W_a$. We denote by $H^0(\widehat{X}_{W_a})$ the set of irreducible components of $\widehat{X}_{W_a}$.

The set $H^0(\widehat{X}_{W_a})$ has many interests that we describe now. It turns out that it is involved in several fields, a priori unrelated to the Shi varieties. 

First of all, we showed in \cite{NC1} that $H^0(\widehat{X}_{W_a})$ was parameterized by a collection of vectors in $\mathbb{Z}^m$, that we called \emph{admitted vectors} (see Section \ref{Section admitted}). We also showed that these vectors were exactly the $\Phi^+$-tuples of integers $(k(w,\alpha))_{\alpha \in \Phi^+}$ when $A_w$ lies in $P_{\mathcal{H}}$. 

When one is interested in $W(\widetilde{A}_n)$, the irreducible components of $\widehat{X}_{W(\widetilde{A}_n)}$ give many interesting results. The action by conjugation of $W(A_n)$ on itself is defined for all $\sigma, \gamma \in W(A_n)$ by ${\sigma.\gamma :=\sigma \gamma \sigma^{-1}}$.  Understanding the orbits of this action, which are the conjugacy classes, yielded a lot of research work in recent decades.
 We related in \cite{NC2} the conjugacy class of $(1~2~\cdots~n+1)$ with the irreducible components of the Shi variety corresponding to $W(\widetilde{A}_n)$, in particular we showed the following theorem

\begin{theorem}[\cite{NC2}, Theorem 1.3]\label{Bijection entre pyras et permutations}
There is a natural bijection between $H^0(\widehat{X}_{W(\widetilde{A}_n)})$ and the circular permutations (i.e. $(n+1)$-cycles) of $W(A_n)$. 
\end{theorem}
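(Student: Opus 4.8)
The plan is to transport everything to the combinatorial model provided by \cite{NC1} and then to a group-theoretic count. By the facts recalled above, $H^{0}(\widehat{X}_{W(\widetilde{A}_{n})})$ is in bijection with the admitted vectors, and these are exactly the coordinate tuples $(k(w,\alpha))_{\alpha\in\Phi^{+}}$ of the alcoves $A_{w}$ contained in the fundamental polytope $P_{\mathcal{H}}$. Thus it suffices to produce a natural bijection between the set $\{A_{w}\subseteq P_{\mathcal{H}}\}$ and the $(n+1)$-cycles of $W(A_{n})\cong S_{n+1}$. In type $A_{n}$ I would fix the standard model $\Phi^{+}=\{e_{i}-e_{j}\mid 1\le i<j\le n+1\}$, so that an admitted vector becomes a triangular array $(k_{ij})_{i<j}$ subject to Shi's compatibility inequalities; this concrete shape is what should be read off as an $(n+1)$-cycle.

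First I would pin down the cardinality, which both guides and constrains the construction. The polytope $P_{\mathcal{H}}=\bigcap_{\alpha\in\Delta}H_{\alpha,0}^{1}$ is a fundamental domain for translation by the weight lattice $P$, whereas the natural translation lattice of $W_{a}$ is the root lattice $Q=\mathbb{Z}\Phi$; a fundamental domain for $Q$ contains exactly $|W|$ alcoves, one in each $W$-coset. Since $[P:Q]=n+1$ in type $A_{n}$, the polytope $P_{\mathcal{H}}$ contains $|W|/[P:Q]=(n+1)!/(n+1)=n!$ alcoves, which is precisely the number of $(n+1)$-cycles in $S_{n+1}$. This matching is the first sanity check, but a mere count does not yield a canonical correspondence.

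To obtain the bijection itself I would work inside the extended affine Weyl group $\widehat{W}_{a}=P\rtimes W=\langle\rho\rangle\ltimes W_{a}$, where $\rho$ is the length-zero element stabilizing $A_{e}$ that rotates the affine Dynkin diagram and whose finite part is the long cycle $c=(1\,2\,\cdots\,n+1)$. Viewing the alcoves as $\widehat{W}_{a}/\langle\rho\rangle$ and taking orbits under translation by $P$ identifies the alcoves inside $P_{\mathcal{H}}$ with a coset space of $\langle c\rangle$ in $W$. On the other side, conjugation $\sigma\mapsto\sigma c\sigma^{-1}$ realizes the $(n+1)$-cycles as a coset space of $\langle c\rangle$ as well, since the centralizer of $c$ in $S_{n+1}$ is exactly $\langle c\rangle$. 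Composing these two identifications gives the desired map; concretely it should send an alcove with finite part $\overline{w}$ to the $(n+1)$-cycle $\overline{w}\,c\,\overline{w}^{-1}$, which explains the passage from the coordinate arrays (the ``pyramids'') to cyclic permutations.

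The main obstacle is to make this last map explicit and to prove it is a genuine bijection rather than an abstract equality of coset spaces. I expect the hard step to be the verification that the finite parts $\overline{w}$ of the $n!$ alcoves in $P_{\mathcal{H}}$ form a transversal of the cosets of $\langle c\rangle$ in $W$, so that $\overline{w}\mapsto\overline{w}\,c\,\overline{w}^{-1}$ is simultaneously well defined and injective; equivalently, one must show that the reading of a triangular array $(k_{ij})$ always produces a single $(n+1)$-cycle and that distinct admitted vectors produce distinct cycles. This requires a careful analysis of Shi's inequalities together with the explicit description of $P_{\mathcal{H}}$, and it is also where the word ``natural'' has to be justified, by checking compatibility with the translation and $\langle\rho\rangle$-actions.
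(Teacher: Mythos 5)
The paper you were given does not actually prove this statement: it is imported verbatim from \cite{NC2} as background, so there is no internal proof to compare against, and I judge your argument on its own terms. Your strategy --- match cardinalities, then identify both sides with a coset space of $\langle c\rangle$ in $S_{n+1}$, where $c=(1\,2\,\cdots\,n+1)$ is the finite part of the rotation element of the extended affine Weyl group --- is viable, and the count $|W|/[P:Q]=(n+1)!/(n+1)=n!$ is correct. This is a genuinely more group-theoretic route than the explicit combinatorial map on triangular arrays constructed in \cite{NC2}; it buys brevity at the cost of explicitness.

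The one genuine gap is precisely the step you flag and defer: that $w\mapsto\overline{w}$ identifies $\mathrm{Alc}(P_{\mathcal{H}})$ with a transversal of the cosets of $\langle c\rangle$ in $W$. You propose to attack this by ``a careful analysis of Shi's inequalities,'' which is the wrong tool and would be painful; the clean argument is a double-coset computation. The set of all alcoves is $\widehat{W}_a/\Omega$, where $\Omega\cong P/Q$ is the stabilizer of $A_e$ in $\widehat{W}_a=P\rtimes W$, and $P_{\mathcal{H}}$ is by construction a fundamental parallelepiped for the lattice $P=\bigoplus_i\mathbb{Z}\omega_i$; hence $\mathrm{Alc}(P_{\mathcal{H}})\leftrightarrow P\backslash\widehat{W}_a/\Omega\cong W/\pi(\Omega)$, where $\pi$ is the projection onto the finite part, and under this chain the alcove $A_w$ with $w=\tau_x\overline{w}$ corresponds to the coset $\overline{w}\,\pi(\Omega)=\overline{w}\langle c\rangle$. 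Combined with orbit--stabilizer for conjugation and the fact that $C_{S_{n+1}}(c)=\langle c\rangle$, this makes $\overline{w}\mapsto\overline{w}c\overline{w}^{-1}$ well defined and bijective, which closes your gap. Two further points need attention: the identification $\pi(\Omega)=\langle c\rangle$ with $c$ a full cycle in type $A_n$ is standard but is doing real work and must be cited or proved; and since \cite{NC2} means something specific by ``natural'' (the bijection intertwines the cover relation on admitted vectors with conjugation by simple transpositions, as in the figure reproduced in the paper), recovering that exact bijection would require checking equivariance, which your sketch does not address.
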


\begin{example}\label{example bij n=3}
The admitted vectors for $n=3$ are represented by a triangle where the coordinates are positioned in Figure \ref{position pyra}.
\end{example}

\medskip

\begin{figure}[h!]
\includegraphics[scale=0.7]{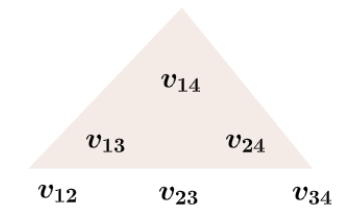} 
\caption{Coordinates of an admitted vector in $W(\widetilde{A}_3)$.}
\label{position pyra}
\end{figure}

Then, the bijection of Theorem \ref{Bijection entre pyras et permutations} can be seen below

\begin{figure}[h!]
\centering
\includegraphics[scale=0.8]{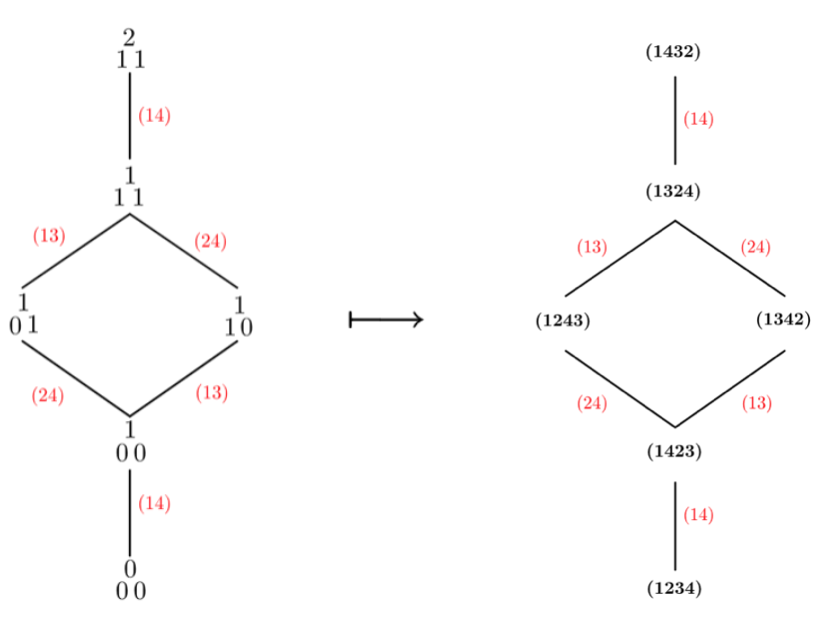} 
\caption{Poset of admitted vectors in $W(\widetilde{A}_3)$ on the left, and circular permutations of $W(A_3)$ on the right. In the expression of admitted vectors we drop the first line since the coefficients $v_{i,i+1}=0$ (see Definitions \ref{admissible vector} and \ref{admitted vector}). The red labels indicate, from left to right, the cover relation in the natural order on $\mathbb{Z}^6$; the conjugation action.} 
\label{image entre posets 1}
\end{figure}

In \cite{NC3} the authors also related $H^0(\widehat{X}_{W(\widetilde{A}_n)})$ to several other things, such as Eulerian numbers, $n$-gon, Young's lattice, and Reidemeister moves via the line diagrams. In particular we showed that $H^0(\widehat{X}_{W(\widetilde{A}_n)})$ has a structure of semidistributive lattice (see  \cite{NC3} Corollary 6.2) and we give a way to compute the join of any pair of two elements (see \cite{NC3} Section 4). 

It is then natural to ask whether the set $H^0(\widehat{X}_{W_a})$ has in general a structure of semidistributive lattice. The goal of this article is to give a positive answer to this question. Our main result is thereby the following theorem.

\begin{theorem}\label{lattice SD}
$H^0(\widehat{X}_{W_a})$ has a structure of semidistributive lattice.
\end{theorem}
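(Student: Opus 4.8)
The plan is to work entirely on the combinatorial side of the bijection of \cite{NC1}. Recall that $H^0(\widehat{X}_{W_a})$ is in bijection with the finite set $\mathcal{A}$ of admitted vectors (Definitions \ref{admissible vector} and \ref{admitted vector}), a subset of $\mathbb{Z}^m$, and that the natural order on $\mathbb{Z}^m$ induces the order we wish to study, as indicated by the red labels in Figure \ref{image entre posets 1}. Thus it suffices to prove that $(\mathcal{A}, \leq)$, with $\leq$ the componentwise order, is a finite semidistributive lattice. I would first make the defining inequalities explicit: a tuple $v = (v_\alpha)_{\alpha \in \Phi^+}$ is admitted precisely when $v_\alpha = 0$ for every simple root $\alpha$ and $v_\alpha + v_\beta \leq v_{\alpha+\beta} \leq v_\alpha + v_\beta + 1$ whenever $\alpha, \beta, \alpha+\beta \in \Phi^+$ (Shi's characterization). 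In particular $0 \leq v_\alpha \leq h(\alpha)-1$, so $\mathcal{A}$ sits inside a finite box and has a least element $\mathbf{0}$ and a greatest element given by $v_\alpha = h(\alpha)-1$.

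The first task is to equip $\mathcal{A}$ with meet and join. Given $u,v \in \mathcal{A}$, the componentwise minimum $\min(u,v)$ satisfies all the lower inequalities $v_\alpha + v_\beta \leq v_{\alpha+\beta}$ but may violate the upper ones; I would restore admissibility by a downward saturation that repeatedly lowers an offending coordinate $w_{\alpha+\beta}$ and, when a lower inequality is then broken for a decomposition $\alpha+\beta = \gamma+\delta$, lowers $w_\gamma$ or $w_\delta$ in turn, until all constraints hold. I expect to show that this procedure terminates, stays below $u$ and $v$, and produces the greatest admitted vector $\leq u,v$, which is then $u \wedge v$; dually, an upward saturation of $\max(u,v)$ yields $u \vee v$. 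Proving that each saturation converges to a well-defined extremal admitted vector, uniformly in the type, is the main technical step and establishes that $(\mathcal{A}, \leq)$ is a lattice. It is also here that non-distributivity appears: because the two saturations interact, $\mathcal{A}$ is in general not closed under componentwise $\min$ and $\max$, so only the weaker semidistributive property can be expected.

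Finally I would establish $\mathrm{SD}_\vee$ and $\mathrm{SD}_\wedge$ using the standard criterion that a finite lattice is semidistributive if and only if every element admits a canonical (irredundant) join representation and, dually, a canonical meet representation. Concretely, I would identify the join-irreducible admitted vectors as the \emph{elementary} vectors concentrated near a single positive root, show via the upward saturation that every $v \in \mathcal{A}$ is an irredundant join of these, and check that this canonical join complex determines $v$; the two semidistributive equalities then follow from the explicit form of $\vee$. A more conceptual route, generalizing the argument of \cite{NC3} for $\widetilde{A}_n$, would be to realize $\mathcal{A}$ as a bounded (congruence-uniform) lattice obtained from the one-element lattice by a sequence of interval doublings, from which semidistributivity is automatic. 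The main obstacle in either approach is uniformity across Dynkin types: the saturation, the join-irreducibles, and the canonical representations are all governed by the incidences $\alpha+\beta = \gamma$ among positive roots, which depend on the type, so the crux is a type-independent description of the join operation and of its join-irreducible elements, whereas the $\widetilde{A}_n$ case rested on permutation combinatorics not available in general.
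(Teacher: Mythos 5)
Your proposal is a program rather than a proof: the two steps on which everything rests are exactly the ones you defer. First, the existence of meets and joins. You observe correctly that the componentwise minimum of two admitted vectors satisfies the lower Shi inequalities but possibly not the upper ones, and you propose a ``downward saturation'' to repair it; but you give no argument that this rewriting procedure terminates, is confluent (i.e.\ independent of the order in which offending coordinates are lowered), or lands on the \emph{greatest} admitted vector below $u$ and $v$ rather than just \emph{some} admitted vector below both. Without that, $(\mathcal{A},\leq)$ is not even shown to be a lattice. Second, semidistributivity: the criterion via canonical join/meet representations is correct for finite lattices, but you do not construct the join-irreducibles or the canonical representations — you only name them. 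A further concrete error: the inequalities you take as the definition of admitted, $v_\alpha+v_\beta\leq v_{\alpha+\beta}\leq v_\alpha+v_\beta+1$ and $0\leq v_\alpha\leq h(\alpha)-1$, are the simply-laced specialization of Theorem \ref{thJYS1}; in types $B$, $C$, $F_4$, $G_2$ the constraints carry the factors $\Vert\alpha\Vert^2$, and the upper bound is $h(\theta^{\vee})-1$ (height of the coroot), not $h(\theta)-1$. Since you insist that uniformity across Dynkin types is the crux, starting from the wrong system of inequalities in precisely the non-uniform cases is fatal to the plan as written.

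For comparison, the paper avoids all of this by changing coordinates back to alcoves: the admitted vectors are exactly the $\Phi^+$-tuples $(k(w,\alpha))_{\alpha}$ of the alcoves contained in $P_{\mathcal{H}}$, and Proposition \ref{unique x} produces the unique alcove $A_w$ with vertex $x=\bigcap_{\alpha\in\Delta}H_{\alpha,1}$, which is then shown (via positivity of the inverse Cartan matrix) to dominate every other alcove of $P_{\mathcal{H}}$ componentwise. This identifies $\mathrm{Alc}(P_{\mathcal{H}})$, with the componentwise order, as the interval $[e,w]$ in the right weak order of $W_a$, and Theorem 8.1 of \cite{NRDS} then gives semidistributivity of any such interval for free. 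Your approach, if completed, would yield more (an explicit, type-uniform formula for $\vee$ and $\wedge$ and a description of the join-irreducibles, generalizing \cite{NC3}), but the existence of the lattice structure and its semidistributivity cannot be extracted from your text as it stands.
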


     \begin{remark}
The poset of irreducible components of $\widehat{X}_{W_a}$ seems to be related to another poset introduced by D. Speyer et al in \cite{GQS}. In this article the authors study the looping case of Mozes’s game of numbers, which concerns the orbits in the reflection representation of affine Weyl groups situated on the boundary of the Tits cone. 
\end{remark}

\section{Generalities about Coxeter groups}\label{section cox}

\subsection{General definitions}\label{definitions gen}
Let $(W,S)$ be a Coxeter system with $e$ the identity element and $S$ the set of Coxeter generators. For $s,t \in S$ we denote $m_{st}$  the order of $st$.
Let $X$ be the $ \mathbb{R}$-vector space with basis
$\{e_s~|~s \in S\}$, and let $B$ be the symmetric bilinear form on $X$ defined by 
$$
B(e_s, e_t)=  \left\{
                          	\begin{array}{ll}
 						  -\text{cos}(\frac{\pi}{m_{st}})  & \text{if}~~ m_{st} < \infty \\
  				      	~~~~-1      & \text{if}~~m_{st} = \infty.
					    \end{array}
					    \right.
$$

  We denote by $O_B(X)$ the orthogonal group of $X$ associated to $B$. For each $s \in S$ we define $ \sigma_s : X \rightarrow X$ by $\sigma_s(x) = x - 2B(e_s,x)e_s$. The map $\sigma :W \hookrightarrow O_B(X)$ defined by $s \mapsto \sigma_s$ is called \emph{the geometrical representation} of $(W,S)$ (for more information the reader may refer to \cite{BOURB} ch. V, $\S$ 4 or \cite{Hum} ch 5.3). 
  Through this representation we identify $(W,S)$ with $(\sigma(W), \sigma(S))$.
  
  \begin{definition}\label{simple system}
   Let us denote 
 $ 
 \text{COS} :=  \{-1\} \cup \{-\text{cos}(\frac{\pi}{k}), k \in \mathbb{N}_{\geq 2}\}.
 $
 A simple system in $(X, B)$ is a finite subset $\Gamma$ in $X$ such that:
\begin{itemize}
\item[i)] $\Gamma$ is linearly independent;
\item[ii)] for all $\alpha, \beta \in \Gamma$ distinct, $B(\alpha, \beta) \in \text{COS}$;
\item[iii)] for all $\alpha \in \Gamma$, $B(\alpha, \alpha) = 1$.
\end{itemize}
\end{definition}

 We denote by $\Psi=W(\Gamma)$ the corresponding root system with basis $\Gamma$. Let us write $\Psi^+:= \Psi \cap \text{cone}(\Gamma)$ and $\Psi^- = -\Psi^+$. Then one has $\Psi=\Psi^- \sqcup \Psi^+$. If $\alpha \in \Psi$ we denote by $s_{\alpha}$ its corresponding  reflection. 
 
Let $\Gamma$ be a simple system in $(X, B)$. The group $W_{\Gamma} :=\langle s_{\alpha}~|~\alpha \in \Gamma \rangle$ is a subgroup of $W$. Moreover it is a Coxeter group with set of generators $S_{\Gamma} = \{s_{\alpha}~|~\alpha \in \Gamma\}$ (We refer the reader to \textcolor{blue}{\cite{Reflection_subgroups}} or \textcolor{blue}{\cite{SRLE}} Section 2.5 for more details about subreflection groups and their root system). We say that $\Gamma$ is a simple system for $(W_{\Gamma}, S_{\Gamma})$. In particular the set $\Delta :=\{e_s~|~s \in S\}$ is a simple system for $(W,S)$ and $S = S_{\Delta}$.

 The \emph{length function} $\ell : W \longrightarrow \mathbb{N}^*$ is defined as follows: $\ell(w)$ is the smallest number $r$ such that there exists an expression $w = s_{i_1}\dots s_{i_r}$ with ${s_{i_k} \in S}$. By convention, $\ell(e) = 0$. This function has been extensively studied and all basic  information about it can be found in \cite{BOURB} or \cite{Hum}. Let $w \in W$. An expression of $w$ is called a \emph{reduced expression} if it is a product of $\ell(w)$ generators. The \emph{inversion set} of $w$ is by definition
 \begin{align*}
 N(w) & := \{ \alpha \in \Psi^+~|~\ell(s_{\alpha}w) < \ell(w) \} \\
 &= \{ \alpha \in \Psi^+~|~w^{-1}(\alpha) \in \Psi^- \}.
 \end{align*}
 
Moreover we have $|N(w)| = \ell(w)$. In the case of affine Weyl groups, the length of an element $w \in W_a$ has a convenient interpretation in terms of its $\Phi^+$-tuple of integers $(k(w,\alpha))_{\alpha \in \Phi^+}$, namely 
$$
\ell(w) = \sum\limits_{\alpha \in \Phi^+}|k(w,\alpha)|.
$$

\subsection{Geometrical representation of $W_a$ and root system}\label{section tits} The goal of this section is to recall and give a good framework of the geometrical representation of affine Weyl groups.

 Let $\widehat{V} = V \oplus \mathbb{R}\delta$ with $\delta$ an indeterminate.  The inner product $(-,-)$ has a unique extension to a symmetric bilinear form on $\widehat{V}$ which is positive semidefinite and has a radical equal to the subspace $\mathbb{R}\delta$. This extension is also denoted $(-,-)$, and it turns out that the set of isotropic vectors associated to the form $(-,-)$ is exactly $\mathbb{R}\delta$. In particular for all $x, y \in V$ and for all $p,q \in \mathbb{Z}$ we have 
\begin{equation}\label{scalar}
(x + p\delta, y + q\delta) = (x, y).
\end{equation}

  The root system of $W_a$ is denoted $\Phi_a$ and its simple system is denoted $\Delta_a$. Using \cite{mDgL}  (Section 3.3 Definition 4 and Proposition 2) a concrete description of the affine (respectively, positive, simple) root system of $W_a$ is provided by:
\begin{align*}
\begin{split}
\Phi_a     &= \Phi^{\vee}+ \mathbb{Z}\delta, \\
\Phi_a^+ &= ((\Phi^{\vee})^+ +\mathbb{N}\delta) \sqcup ((\Phi^{\vee})^-+\mathbb{N}^*\delta), \\
\Delta_a &= \Delta^{\vee} \cup \{\alpha_0^{\vee}+ \delta\}.
\end{split}
\end{align*}

 \begin{remark}
 The link between $\widehat{V}$  and the geometrical representation is as follows. Let $\Delta_a = \{\alpha_i^{\vee}~|~i=1,\dots,n\}\cup \{\alpha_0^{\vee}+\delta\}$ be the simple system associated to $W_a$. To simplify the notations we denote $\lambda_i = \alpha_i^{\vee}$. We can now identify the $X$ of Section \ref{definitions gen} with $\widehat{V}$, by sending $e_{s_0}$ to $\frac{\lambda_0 + \delta}{||\lambda_0||}$ and $e_{s_i}$ to $\frac{\lambda_i}{||\lambda_i||}$ for $s_i \in S$. Since $\delta$ is isotropic for $(-,-)$ we only consider the scalar products $(\lambda_i, \lambda_j)$ for $i,j=0,\dots,n$. It is well known that $(\lambda_i, \lambda_j) = ||\lambda_i|| \cdot ||\lambda_j||\text{cos}(\theta)$ where $\theta$ is the angle between $\lambda_i$ and $\lambda_j$ in the plane generated by these two vectors. Moreover, it is also well known that $\theta = \pi-\frac{\pi}{m_{ij}}$. It follows that 
\begin{align}\label{inner prod and B}
(\lambda_i, \lambda_j) = ||\lambda_i||\cdot ||\lambda_j||\text{cos}(\pi-\frac{\pi}{m_{ij}})&=-||\lambda_i||\cdot||\lambda_j||\text{cos}(\frac{\pi}{m_{ij}})  \\&=||\lambda_i||\cdot||\lambda_j||B(e_{s_i}, e_{s_j}) \notag
\end{align}

 Furthermore we know that in the crystallographic root systems there are at most two root lengths. If $\lambda_i$ is short we have set before that $||\lambda_i||=1$. Therefore in the simply laced cases we have $(\lambda_i, \lambda_j) = B(e_{s_i}, e_{s_j})$. When $\lambda_i$ is longer than $\lambda_j$ we have two situations to look at: if $m_{ij} = 4$ then $||\lambda_i|| = \sqrt{2}||\lambda_j|| = \sqrt{2}$, and in particular $(\lambda_i, \lambda_j) = \sqrt{2}B(e_{s_i}, e_{s_j})$. If $m_{ij}= 6$ then $||\lambda_i|| = \sqrt{3}||\lambda_j|| = \sqrt{3}$ and it follows that $(\lambda_i, \lambda_j) = \sqrt{3}B(e_{s_i}, e_{s_j})$.

The geometrical representation sends the reflection $s_{\alpha,k}$ in $V$ to the reflection $s_{\alpha^{\vee}-k\delta}$ in $\widehat{V}$. In particular one can think of the hyperplane $H_{\alpha,k}$ as the fixed points of $s_{\alpha^{\vee}-k\delta}$. 
\end{remark}

%\begin{remark}
%Using (\ref{inner prod and B}) we can express the definition of simple system in $V$ in terms of the inner product $(-,-)$: $\Gamma \subset V$ is a simple system if and only if:
%\begin{itemize}
%\item[i)] $\Gamma$ is positively linearly independent;
%\item[ii)] for all $\alpha, \beta \in \Gamma$ distinct, $\frac{(\alpha, \beta)}{||\alpha||\cdot ||\beta||} \in \text{COS}$.
%\end{itemize}
%\end{remark}

\section{Background about the Shi variety}

\subsection{Admitted vectors}\label{Section admitted}
Let $\Phi$ be an irreducible crystallographic root system with simple system $\Delta = \{\alpha_1, \dots, \alpha_n \}$ and positive root system $\Phi^+ =\{\beta_1,\dots, \beta_m\}$. Let $W_a$ be the affine Weyl group corresponding to $\Phi$. 

 We recall in this section some necessary material. All the definitions were introduced in \cite{NC1}. We denote $\mathbb{Z}[X_{\Delta}] := \mathbb{Z}[X_{\alpha_1},\dots ,X_{\alpha_n}]$ and $\mathbb{Z}[X_{\Phi^+}] := \mathbb{Z}[X_{\beta_1},\dots ,X_{\beta_m}]$. For $w \in W_a$ and $Q \in \mathbb{Z}[X_{\Delta}]$ we denote 
$$
Q(w):=Q(k(w,\alpha_1),\dots ,k(w,\alpha_n)).
$$

\begin{notation} For two integers $a,b \in \mathbb{Z}$, the notation $\llbracket a,b \rrbracket$ means $[a,b] \cap \mathbb{Z}$.
If $Y \subset \mathbb{R}^m$ we denote by $Y(\mathbb{Z})$ the set of integral points of $Y$. 
\end{notation}
\bigskip

The following theorem is the Shi's characterization of the elements $w \in W_a$ by their $\Phi^+$-tuples of integers.

\bigskip

\begin{theorem}[\cite{JYS1}, Theorem 5.2]\label{thJYS1} 
Let $A = \bigcap\limits_{\alpha \in \Phi^+} H^1_{\alpha,k_{\alpha}}$ with $k_{\alpha} \in \mathbb{Z}$. Then $A$ is an alcove, if and only if, for all $\alpha$, $\beta \in \Phi^+$ satisfying  $\alpha + \beta \in \Phi^+$, we have the following inequality

\begin{equation}\label{inequation}
||\alpha||^2k_{\alpha} + ||\beta||^2k_{\beta} +1 \leq ||\alpha + \beta||^{2}(k_{\alpha+\beta} +1) \leq ||\alpha||^2k_{\alpha} + ||\beta||^2k_{\beta} + ||\alpha||^2+ ||\beta||^2 + ||\alpha+\beta||^2 -1.
\end{equation}
\end{theorem}

\bigskip

The following theorem decomposes the Shi coefficients as polynomial equations.
\begin{theorem}[\cite{NC1}, Theorem 4.1]\label{polynome}
Let $w \in W_a$. Then for all $\theta \in \Phi^+$ there exists a linear polynomial $P_{\theta} \in \mathbb{Z}[X_{\Delta}]$  with positive coefficients and  $\lambda_{\theta}(w) \in \llbracket 0, h(\theta^{\vee})-1\rrbracket$ such that 
\begin{equation} \label{eq:P}
k(w,\theta) = P_{\theta}(w) + \lambda_{\theta}(w).
\end{equation}
\end{theorem}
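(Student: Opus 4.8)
The plan is to read off both the polynomial $P_\theta$ and the bound on the remainder directly from the geometry of the alcove $A_w$, using the interpretation of the Shi coefficients as floors of linear forms. First I would recall that, by the definition of the strips $H_{\alpha,k}^1$, for any point $x$ lying in the open alcove $A_w = \bigcap_{\alpha \in \Phi^+} H_{\alpha, k(w,\alpha)}^1$ one has $k(w,\alpha) < ( x, \alpha^{\vee} ) < k(w,\alpha)+1$ for every $\alpha \in \Phi^+$; equivalently $k(w,\alpha) = \lfloor ( x, \alpha^{\vee} ) \rfloor$, and this value is independent of the chosen $x$. Since $x$ lies in the interior of an alcove it lies on no hyperplane of $\mathcal{H}$, so $( x, \alpha^{\vee} ) \notin \mathbb{Z}$ for all $\alpha$, which is what legitimizes taking floors.

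Next I would expand $\theta^{\vee}$ in the basis of simple coroots. Since $\theta \in \Phi^+$, its coroot $\theta^{\vee}$ is a positive coroot, hence $\theta^{\vee} = \sum_{i=1}^n b_i \alpha_i^{\vee}$ with $b_i \in \mathbb{N}$ and $\sum_{i} b_i = h(\theta^{\vee})$. By bilinearity of $( -, - )$ this yields $( x, \theta^{\vee} ) = \sum_{i} b_i ( x, \alpha_i^{\vee} )$ for every $x$. I then set $P_{\theta} := \sum_{i=1}^n b_i X_{\alpha_i} \in \mathbb{Z}[X_{\Delta}]$, which is a linear polynomial with nonnegative integer coefficients (the coordinates of $\theta^{\vee}$ over the simple coroots) depending only on $\theta$ and not on $w$; in particular $P_{\theta}(w) = \sum_{i} b_i \, k(w,\alpha_i)$ is an integer.

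The core of the argument is the elementary estimate obtained by summing the alcove inequalities. Writing $L := P_{\theta}(w)$, from $k(w,\alpha_i) < ( x, \alpha_i^{\vee} ) < k(w,\alpha_i)+1$ and $b_i \geq 0$ I would sum over $i$ to obtain $L < ( x, \theta^{\vee} ) < L + h(\theta^{\vee})$. As $L$ and $L + h(\theta^{\vee})$ are integers bracketing the non-integral real number $( x, \theta^{\vee} )$, taking floors gives $L \leq k(w,\theta) = \lfloor ( x, \theta^{\vee} ) \rfloor \leq L + h(\theta^{\vee}) - 1$. Defining $\lambda_{\theta}(w) := k(w,\theta) - P_{\theta}(w)$, this is precisely the membership $\lambda_{\theta}(w) \in \llbracket 0, h(\theta^{\vee})-1 \rrbracket$ together with the asserted decomposition $k(w,\theta) = P_{\theta}(w) + \lambda_{\theta}(w)$.

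I do not expect any serious obstacle here: once the Shi coefficient is identified with a floor of a linear form, the statement is a one-line consequence of expanding $\theta^{\vee}$ over the simple coroots and summing strict inequalities. The only points requiring care are (i) justifying that interior points of $A_w$ avoid every hyperplane of $\mathcal{H}$, so that the floor description of $k(w,\cdot)$ is valid and independent of $x$, and (ii) checking that $P_{\theta}$ depends only on $\theta$, so that the same polynomial governs every $w \in W_a$ uniformly; both are immediate from the definitions above.
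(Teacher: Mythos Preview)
The paper does not actually prove this theorem: it is quoted verbatim from \cite{NC1} (Theorem~4.1 there) and used as black-box input to the construction of the Shi variety, so there is no in-paper argument to compare your proposal against.

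That said, your argument is correct and is the expected one. Identifying $k(w,\alpha)=\lfloor (x,\alpha^{\vee})\rfloor$ for any interior point $x\in A_w$, expanding $\theta^{\vee}=\sum_i b_i\alpha_i^{\vee}$ over the simple coroots, and summing the strict inequalities $k(w,\alpha_i)<(x,\alpha_i^{\vee})<k(w,\alpha_i)+1$ weighted by the $b_i\ge 0$ immediately gives $L<(x,\theta^{\vee})<L+h(\theta^{\vee})$ with $L=\sum_i b_i k(w,\alpha_i)$, whence $\lambda_\theta(w):=k(w,\theta)-L\in\llbracket 0,h(\theta^{\vee})-1\rrbracket$. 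One tiny wording point: you describe the coefficients of $P_\theta$ as ``nonnegative'' whereas the statement says ``positive''; this is the usual looseness (only the $b_i$ in the support of $\theta^{\vee}$ are strictly positive) and does not affect the proof.
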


\medskip

\begin{definition}\label{affine var} Let $\theta \in \Phi^+$. Write $I_{\theta} := \llbracket 0, h(\theta^{\vee})-1 \rrbracket$. Notice that if $\theta$ is a simple root then $I_{\theta}=\{0\}$. For any root $\theta \in \Delta$ we set $P_{\theta} = X_{\theta}$ and $\lambda_{\theta}=0$. We denote by $P_{\theta}[\lambda_{\theta}]$ the polynomial $P_{\theta}+ \lambda_{\theta}-X_{\theta} \in \mathbb{Z}[X_{\Phi^+}]$. We define the ideal $J_{W_a}$ of $\mathbb{R}[X_{\Phi^+}]$ as ${J_{W_a} := \sum\limits_{\alpha \in \Phi^+} ~\langle \prod\limits_{\lambda_{\alpha} \in I_{\alpha}} P_{\alpha}[\lambda_{\alpha}] \rangle}$. We define  $X_{W_a}$ to be the affine variety associated to $J_{W_a}$, that is
$$
  X_{W_a} := V(J_{W_a}).
$$
\end{definition}

\begin{definition}\label{admissible vector}
 We say that $v=(v_{\alpha})_{\alpha \in \Phi^+} \in \mathbb{N}^m$ is an \emph{admissible vector} (or just \emph{admissible}) if it satisfies the boundary conditions, that is if for all $\alpha \in \Phi^+$ one has $v_{\alpha} \in I_{\alpha}$ (where $I_{\alpha}$ is defined in Definition \ref{affine var}). For instance, all the $\lambda:=(\lambda_{\alpha})_{\alpha \in \Phi^+}$ coming from the polynomials $P_{\alpha}[\lambda_{\alpha}]$ give rise to admissible vectors. Furthermore, each admissible vector arises this way. For short we will write $\lambda$ instead of $(\lambda_{\alpha})_{\alpha \in \Phi^+}$.
\end{definition}
 
 \begin{definition}
   Let $\lambda$ be an admissible vector. We denote by $J_{W_a}[\lambda]$ the ideal of $\mathbb{R}[X_{\Phi^+}]$ generated by the polynomials $P_{\alpha}[\lambda_{\alpha}]$ with $\alpha \in \Phi^+$, that is
  $
  J_{W_a}[\lambda]  := \langle P_{\alpha}[\lambda_{\alpha}],~\alpha \in \Phi^+\rangle.
 $		
 %Notice that we have the equality $J_{W_a}[\lambda] = \sum\limits_{\alpha \in \Phi^+} \langle P_{\alpha}[\lambda_{\alpha}] \rangle$.
 
 Then we define $X_{W_a}[\lambda]$ as the affine subvariety of $X_{W_a}$ associated to the ideal $J_{W_a}[\lambda]$, that is
$
X_{W_a}[\lambda] := V(J_{W_a}[\lambda]).
$
 \end{definition}
 
 \begin{definition}\label{admitted vector}
We denote $S[W_a]$ as the system of all the inequalities coming from Theorem \ref{thJYS1}, in other words a vector $v=(v_{\alpha})_{\alpha \in \Phi^+} \in \mathbb{R}^m$ is solution of $S[W_a]$ if whenever we have $\alpha$, $\beta \in \Phi^+$ satisfying $\alpha + \beta \in \Phi^+$ then (\ref{inequation}) is satisfied, i.e.
$$
||\alpha||^2v_{\alpha} + ||\beta||^2v_{\beta} +1 \leq ||\alpha + \beta||^{2}(v_{\alpha+\beta} +1) \leq ||\alpha||^2v_{\alpha} + ||\beta||^2v_{\beta} + ||\alpha||^2+ ||\beta||^2 + ||\alpha+\beta||^2 -1.
$$

 Let $\lambda$ be an admissible vector. We say that $\lambda$ is \emph{admitted} if it satisfies the system $S[W_a]$. 
\end{definition}

\begin{example}
We give in this example the set of inequations $S[W_a]$ for $W_a = W(\widetilde{A}_3)$. The positive root system of $A_3$ is given by the vectors  $e_i-e_j \in \mathbb{R}^4$ with $1 \leq i < j \leq 4$. We represent them in Figure \ref{rootA3} where it is easy to see when two positive roots have their sum that is also a positive root. 

\begin{figure}[h!]
\includegraphics[scale=4.5]{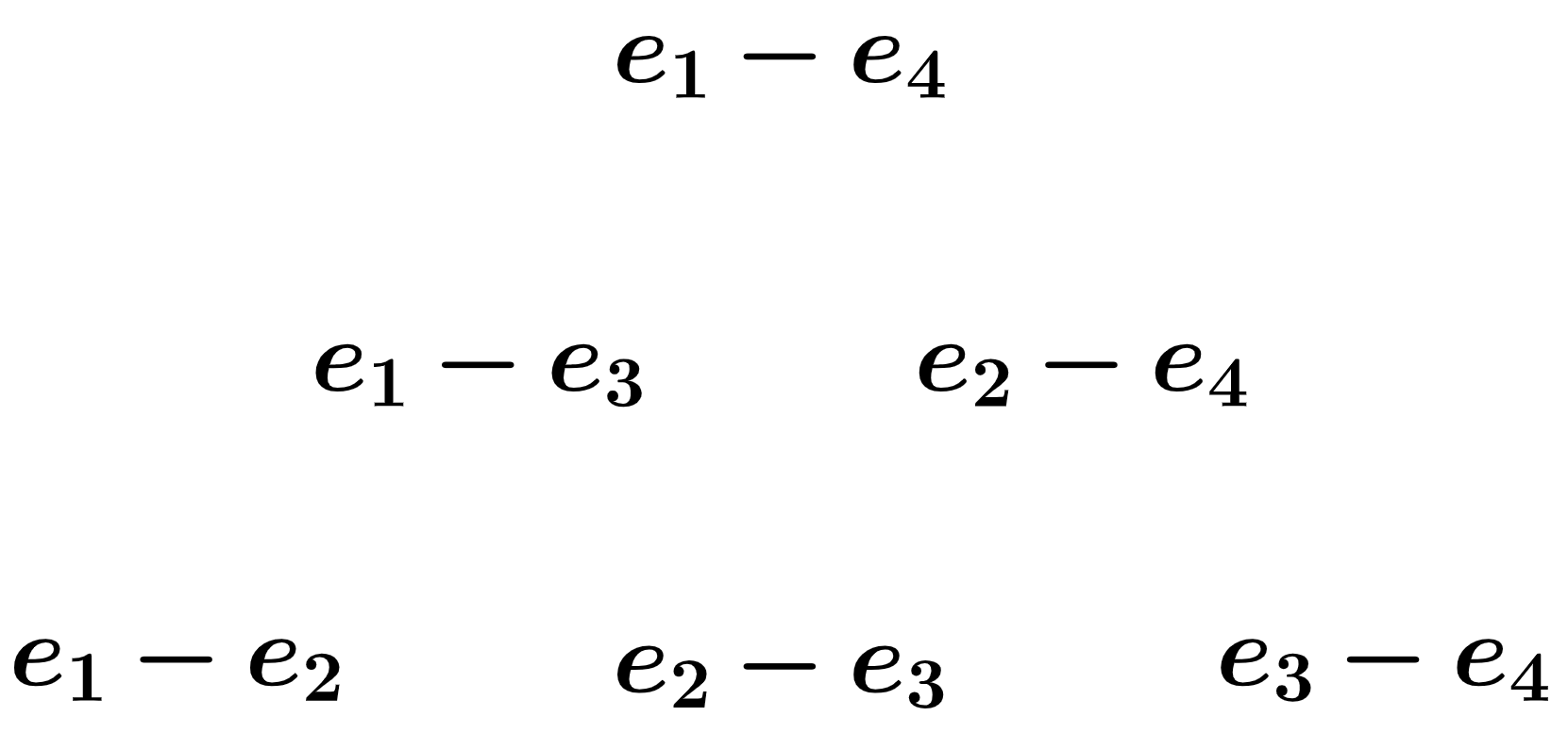} 
\caption{Positive roots of $A_3$ ordered by their hight.}
\label{rootA3}
\end{figure}

The roots of $A_3$ have all the same length which is $\sqrt{2}$. Therefore we can normalize the inequalities by $1/2$. It follows that a vector $v=(v_{ij}) \in \mathbb{Z}^6$  is solution of $S[W(\widetilde{A}_3)]$ if it satisfies the following inequalities
$$
\left\{
\begin{array}{ll}
 v_{12}+v_{23} \leq v_{13} \leq   v_{12}+v_{23} +1 \\
 v_{23}+v_{34} \leq v_{24} \leq   v_{23}+v_{34} +1 \\
 v_{12}+v_{24} \leq v_{14} \leq   v_{12}+v_{24} +1 \\
 v_{13}+v_{34} \leq v_{14} \leq   v_{13}+v_{34} +1 \\
\end{array}
\right.
$$
\end{example}

The next result gives the paramaterization of the elements of $H^0(\widehat{X}_{W_a})$ in terms of the admitted vectors.

\begin{theorem}[\cite{NC1}, Theorem 5.3]\label{TH central}
The map $\iota : {W_a \longrightarrow X_{W_a}(\mathbb{Z})}$ defined by $w \longmapsto (k(w,\alpha))_{\alpha \in \Phi^+}$ induces by corestriction a bijective map from $W_a$ to the integral points of a subvariety of $X_{W_a}$, denoted $\widehat{X}_{W_a}$, which we call the Shi variety of $W_a$. This subvariety is nothing but $ \widehat{X}_{W_a} = \bigsqcup\limits_{\lambda~\text{admitted}} X_{W_a}[\lambda]$. 

\begin{comment}
\begin{center}
\begin{tikzpicture} 

\node at (0, 0) {$W_a$} ;
\node at (4, 0) {$X_{W_a}(\mathbb{Z}	)$} ;
\node at (4, -2) {$\widehat{X}_{W_a}(\mathbb{Z}).$} ;
\node at (3, -0.8) {$\circlearrowleft$} ;
%\node at (4.15, -0.9) {$\iota$} ;
\node at (2, 0.2) {$\iota$} ; 
\node at (2,-0.99) {\rotatebox{-125}{$\wr$}} ;

\usetikzlibrary{arrows}
\draw [right hook->] (0.3,0) -- (3,0) ;
\draw [dashed, ->] (0.3,-0.2) -- (3,-1.7) ;
\draw [right hook ->] (4,-1.6) -- (4,-0.2) ; 

\end{tikzpicture}

\end{center}
\end{comment}
\end{theorem}

\subsection{Fundamental polytope $P_{\mathcal{H}}$}
In this section we recall some material about the polytope $P_{\mathcal{H}}$. These notions will be used in the proof of Theorem \ref{lattice SD}.

 Let $\mathbb{Z}\Phi^{\vee}$ be the coroot lattice and let us write $\mathbb{Z}\Phi^{\vee} = \mathbb{Z}\alpha_1^{\vee} \oplus \cdots\oplus \mathbb{Z}\alpha_n^{\vee}$. We define its dual lattice $(\mathbb{Z}\Phi^{\vee})^{*}$ as 
$$
(\mathbb{Z}\Phi^{\vee})^{*} := \{ x \in V ~|~(x, y ) \in \mathbb{Z} ~\forall y \in \mathbb{Z}\Phi^{\vee} \}.
$$
The lattice $(\mathbb{Z}\Phi^{\vee})^{*}$ is called the \emph{weight lattice}. This lattice has the following decomposition ${(\mathbb{Z}\Phi^{\vee})^{*}= \mathbb{Z}\omega_1\oplus \cdots\oplus \mathbb{Z}\omega_n}$ where $\omega_i$ is such that $( \alpha_i^{\vee}, \omega_j ) = \delta_{ij}$. The elements $\omega_i$ are called the \emph{fundamental weights} (with respect to $\Delta$).

 The fundamental weights $\omega_i$ are some of the vertices  of $P_{\mathcal{H}}$ and we have $P_{\mathcal{H}} = \{\sum\limits_{i=1}^nc_i\omega_i~|~ c_i \in \llbracket 0,1 \rrbracket \}$. Since $(\omega_i, \omega_j ) \geq 0$ for all $i,j$, the element of maximal norm in $P_{\mathcal{H}}$  is the vertex $\rho :=\sum\limits_{i=1}^n\omega_i$. Moreover, if $z \in \text{cone}(\Delta)$ we have $(z,\omega_i) \geq 0$ for all fundamental weight $\omega_i$. Finally, we define the set $$\text{Alc}(P_{\mathcal{H}}) := \{ w \in W_a~|~A_w \subset P_{\mathcal{H}} \}.$$

%It is known (see \textcolor{blue}{\cite{BOURB}} ch. VI, $\S$ 1, exercice 7) that the index of connection of $\Phi$ is the determinant of the Cartan matrix associated fo $\Phi$. Moreover, the Cartan matrix associated to $\Phi^{\vee}$ is the transpose of the Cartan matrix associated to $\Phi$. It follows that the indexes of connection of $\Phi$ and $\Phi^{\vee}$ are the same. 

 %Using (see  \textcolor{blue}{\cite{RichardKane}}, Section 11-6, Lemma C) we can deduce that $|\text{Alc}(P_{\mathcal{H}})|  = \frac{|W({\Delta})|}{f_{\Phi}}.$

Let $w \subset \text{Alc}(P_{\mathcal{H}})$. From the Shi's characterization it follows that $k(w,\alpha) = 0$ for all $\alpha \in \Delta$, and reciprocally, if $w' \in W_a$ is such that $k(w', \alpha) = 0$ for all $\alpha \in \Delta$ then $A_{w'} \subset P_{\mathcal{H}}$. The elements of this polytope seen as $\Phi^+$-tuple of integers are exactly the \emph{admitted} vectors and moreover a vector $\lambda \in \bigoplus\limits_{\alpha \in \Phi^+}\mathbb{R}\alpha$ is admitted if and only if there exists $w \in W_a$ such that $k(w,\alpha) = \lambda_{\alpha}$ for all $\alpha \in \Phi^+$ and such that $w \in \text{Alc}(P_{\mathcal{H}})$.  

\medskip

\begin{example} Let us take $W_a =W( \widetilde{B}_2)$ with simple system $\{\alpha_1, \alpha_2\}$. A short computation shows that $\omega_1 = \frac{1}{2}(2\alpha_1 + \alpha_2)$ and $\omega_2 = \alpha_1 + \alpha_2$.

\begin{figure}[h!]
\centering
\includegraphics[scale=0.5]{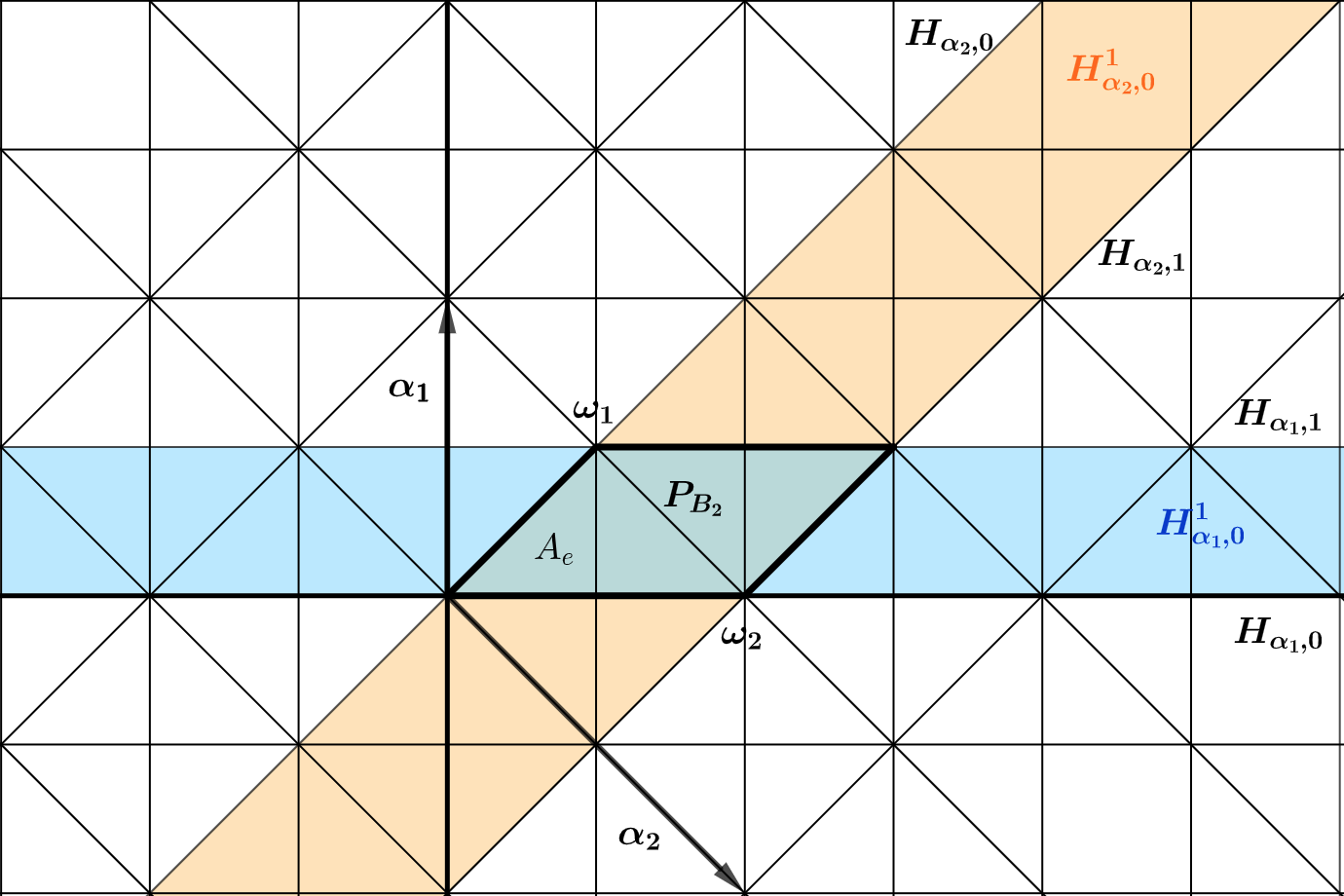} 
\caption{Fundamental parallelepiped $P_{B_2}$.}
\end{figure}

\end{example}

\section{Lattice structure on $H^0(\widehat{X}_{W_a})$}   \label{section poset}

\subsection{Poset structure on $H^0(\widehat{X}_{W_a})$}
In this section we define the natural poset structure on $H^0(\widehat{X}_{W_a})$ which we give in Definition \ref{ordre}. For $\lambda=(\lambda_{\alpha})_{\alpha \in \Phi^+}$ an admitted vector we denote by $w_{\lambda}$ the associated element of $\text{Alc}(P_{\mathcal{H}})$, that is $w_{\lambda}$ is such that $k(w_{\lambda}, \alpha) = \lambda_{\alpha}$ for all $\alpha \in \Phi^+$. Conversely, for $w \in \text{Alc}(P_{\mathcal{H}})$ we denote by $\lambda_w$ its corresponding Shi vector.

 \begin{definition}\label{ordre}
 The set of Shi vectors inherits the natural order of $\mathbb{Z}^m$. More precisely for two shi vectors $v=(v_{\alpha})_{\alpha \in \Phi^+}$ and $v'=(v'_{\alpha})_{\alpha \in \Phi^+}$ we say that $v \leq_S v'$ if and only if $v_{\alpha} \leq v'_{\alpha}$ for all $\alpha \in \Phi^+$. If $w, w'$ are the corresponding elements of $W_a$ associated to the Shi vectors $v, v'$ we write $w \leq_S w'$ as well.
 
 The set  $H^0(\widehat{X}_{W_a})$ has also a natural poset structure induced from the previous one. It is defined by ${X_{W_a}[\lambda] \leq_S  X_{W_a}[\gamma]}$ if and only if $\lambda \leq_S \gamma$.  %We will write either $\lambda \leq \gamma$ or ${X_{W_a}[\lambda] \leq  X_{W_a}[\gamma]}$.
  There is a minimal element in this poset which is the component corresponding to the admitted vector 0. %If $w$ and $w' \in P_{\mathcal{H}}$ we also say that $ w \leq w'$ if $k(w,\alpha) \leq k(w',\alpha)$ for all $\alpha \in \Phi^+$.
 The cover relation of $\leq_S$ is denoted by $\lessdot$ and it is given by $\lambda \lessdot \gamma$ if and only if there exists $\alpha \in \Phi^+$ such that $\gamma_{\alpha} = \lambda_{\alpha}+1$ and $\gamma_{\beta} = \lambda_{\beta}$ for $\beta \in \Phi^+\setminus \{\alpha\}$. 
 
 An interval for $\leq_S$ is denoted by $[-,-]_S$.
 \end{definition}  
 
 \medskip
 
 \begin{remark}\label{remark}
 The restriction of the right weak order $\leq_R$ of $W_a$ to $\text{Alc}(P_{\mathcal{H}})$  is exactly the same thing as the order $\leq_S$, that is for two admitted vectors $\lambda, \lambda'$ one has $ \lambda \leq_S \lambda' \Longleftrightarrow w_{\lambda} \leq_R w_{\lambda'}$. This claim follows easily from the fact that $w_{\lambda} \leq_R w_{\lambda'}$ if and only if their corresponding alcove share a face (of maximal dimension), which is equivalent to say that there exists $\alpha \in \Phi^+$ such that $\lambda'_{\alpha} = \lambda_{\alpha}+1$ and $\lambda'_{\beta} = \lambda_{\beta}$ for $\beta \in \Phi^+\setminus \{\alpha\}$.

 However, if we take two elements $w, w'$ such that their alcove don't live in the positive orthant then the equivalence $w \leq_S w' \Longleftrightarrow w \leq_R w'$ might fail.
\end{remark}

\medskip

\begin{example}
The polytope $P_{B_2}$ is as follows:
\begin{figure}[h!]
\centering
\includegraphics[scale=0.43]{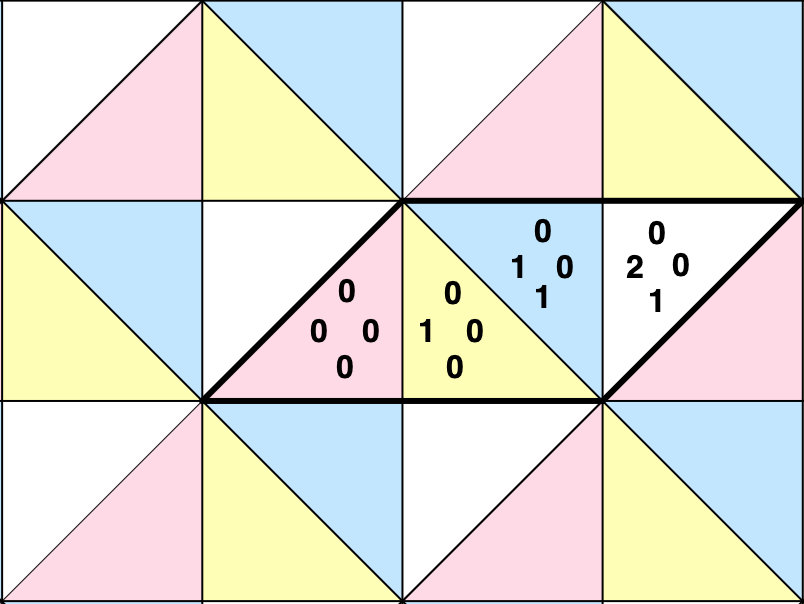} 
\caption{Polytope $P_{B_2}$ seen as set of representatives of irreducible components of $\widehat{X}_{W(\widetilde{B}_2)}$ (See Figure 9 of \cite{NC1} for more details about the colors).} 
\label{PB2}
\end{figure}

In Figure \ref{PB2} we denote the admitted vectors by dropping the two zeros corresponding to the simple roots, and by ordering the coordinates according to the height of the dual roots. Therefore, $H^0(\widehat{X}_{W(\widetilde{B}_2)})$ is as follows: 
\begin{figure}[h!]
\centering
\includegraphics[scale=0.63]{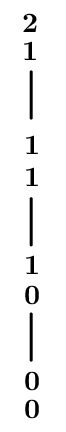} 
\caption{Poset associated to $\widehat{X}_{W(\widetilde{B}_2)}$.}
\label{PB2}
\end{figure}
\end{example}

\bigskip ~~
\bigskip

\begin{example}
Adapting Example \ref{example bij n=3} for $n=4$ we get the following presentation of an admitted vector

\begin{figure}[h!]
\centering
\includegraphics[scale=0.7]{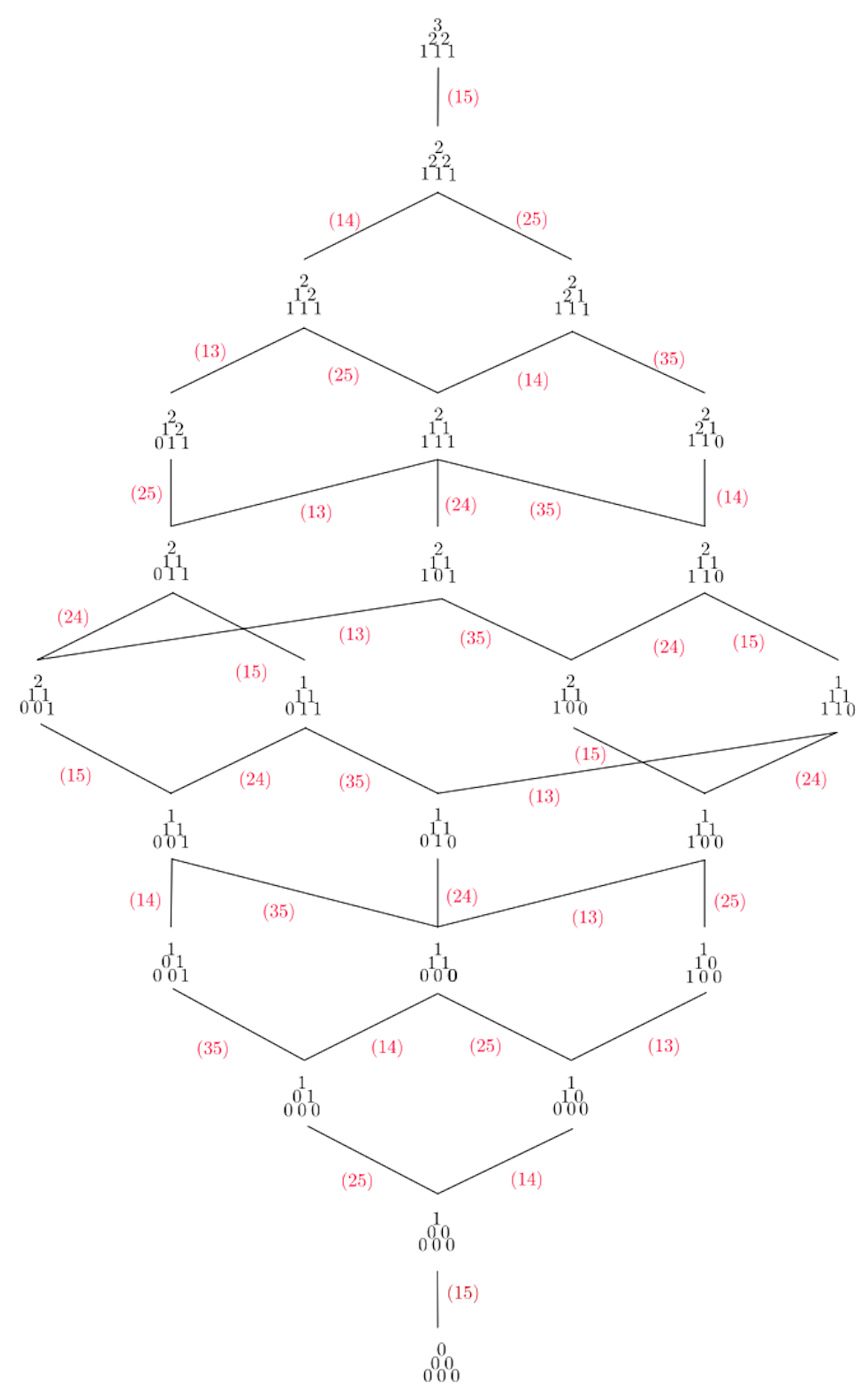} 
\caption{Poset associated to  $\widehat{X}_{W(\widetilde{A}_4)}$. The coordinates on the simple roots are erased since they are all equal to 0. The red labels represent the natural order on $\mathbb{Z}^{10}$.}
\label{pyra A4}
\end{figure}

\end{example}

 \newpage ~

\begin{example}
The positive roots of $B_3^{\vee}$ can be arranged according to their height into a shape looking like the temple of Kukulcan. Moreover the base is the set of dual simple roots. If $\lambda$ is an admitted vector, its coordinates on the dual simple roots are $0$.

\begin{figure}[h!]
\centering
\includegraphics[scale=0.55]{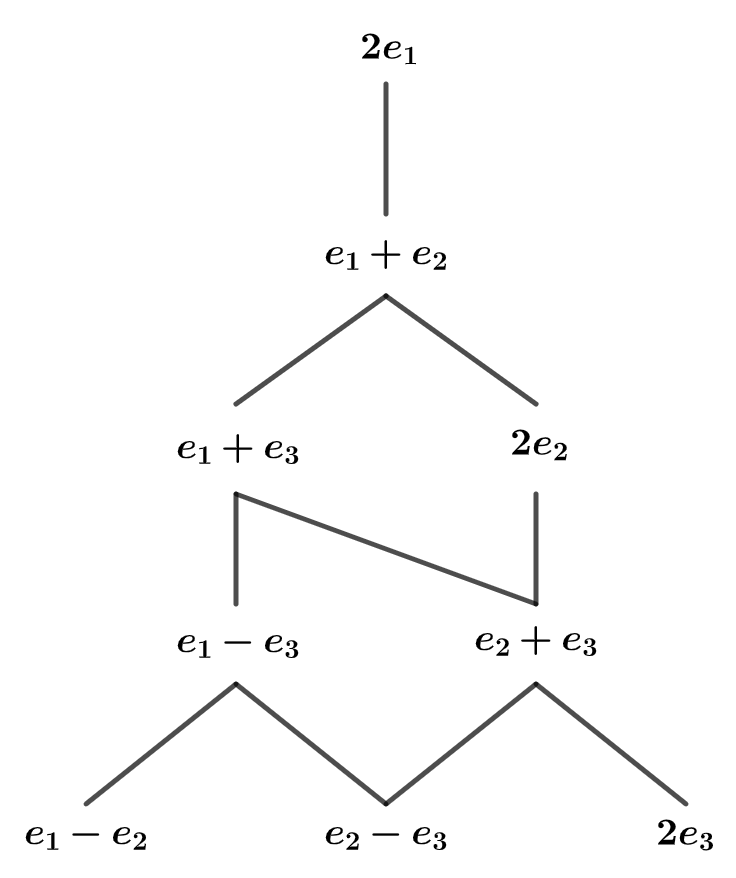} 
\caption{Positive roots of $B_3^{\vee}$.}
\label{immeuble}
\end{figure}

\bigskip ~~
\bigskip ~~
\bigskip ~~

\begin{figure}[h!]
\centering
\includegraphics[scale=0.8]{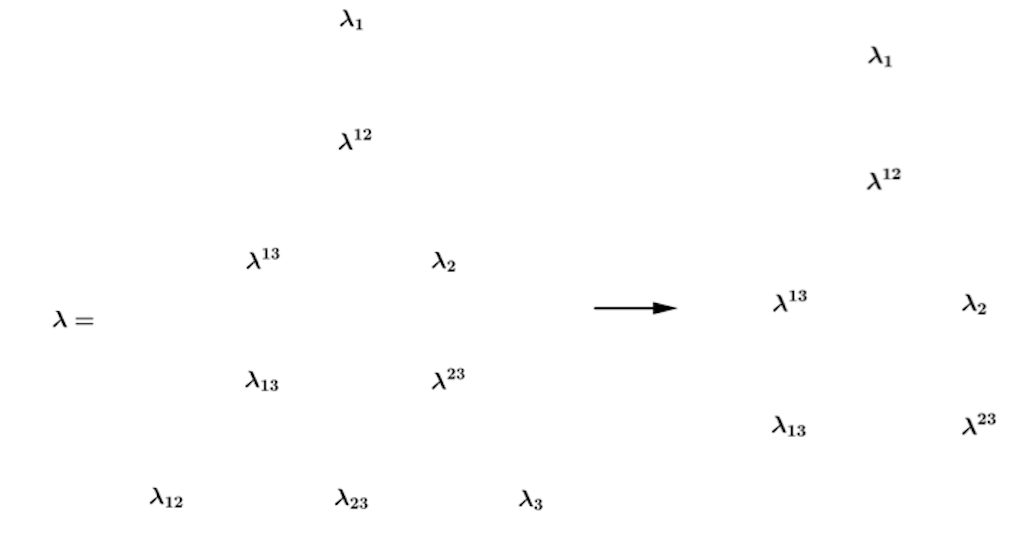} 
\caption{Presentation of an admitted vector $\lambda$ in $W(\widetilde{B}_3)$ where we erase the base.}
\label{immeuble}
\end{figure}

\end{example}

\begin{figure}[h!]
\centering
\includegraphics[scale=0.87]{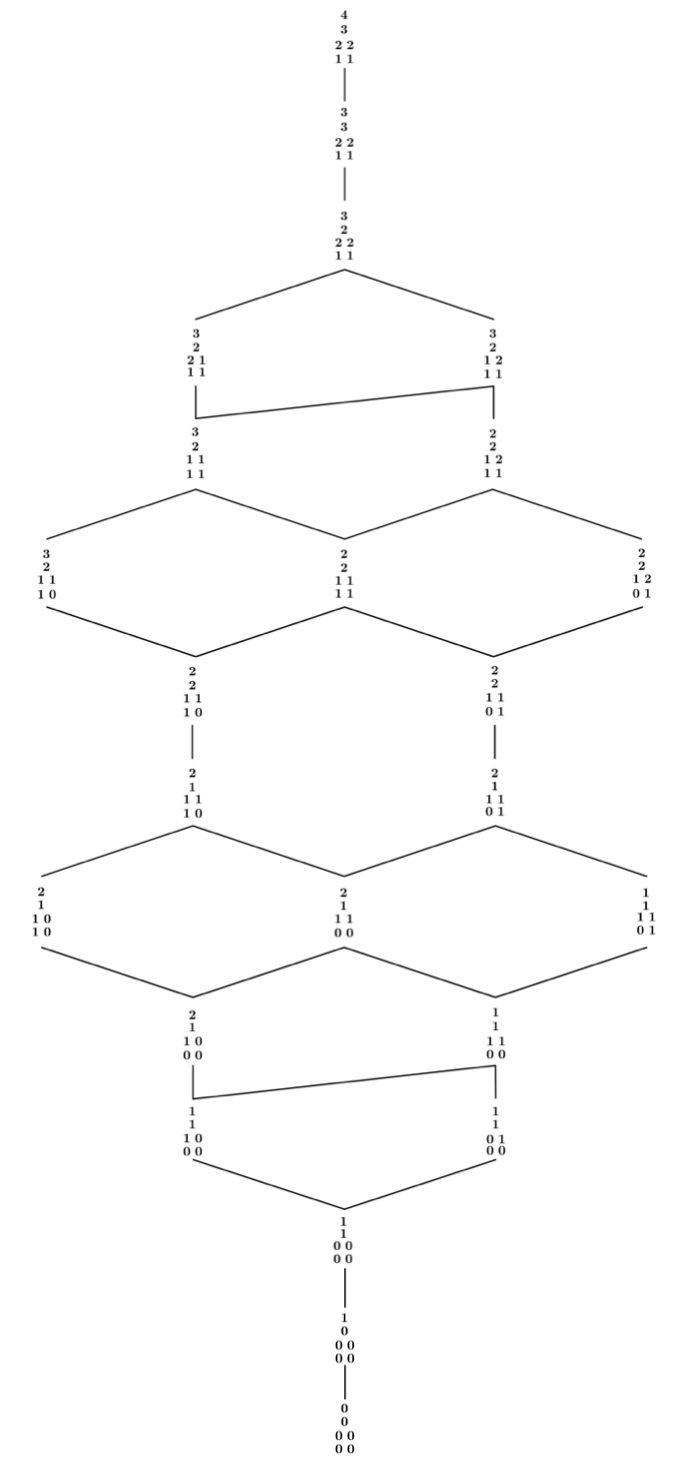} 
\caption{Poset associated to  $\widehat{X}_{W(\widetilde{B}_3)}$.}
\label{immeuble}
\end{figure}

\newpage

\subsection{Proof of the main result}

In this paragraph we recall some basics about lattices. A \emph{lattice} is a partially ordered set such that every pair $x, y$ of elements has a meet (greatest lower bound) $x \wedge y$ and join (least upper bound) $x \vee y$. A lattice is
distributive if the meet operation distributes over the join operation and the join
distributes over the meet.

A lattice $L$ is \emph{join semidistributive} if whenever $x, y, z \in L$ satisfy $x\vee y = x \vee z$, they also satisfy $x \vee (y \wedge z) = x \vee y$. This is equivalent to the following condition:
If $X$ is a nonempty finite subset of $L$ such that $x \vee y = z$ for all $x \in X$, then $(\bigwedge_{x \in X}x) \vee y = z$.  The lattice is \emph{meet semidistributive} if the dual condition
$(x\wedge y = x \wedge z) \Rightarrow (x \wedge(y \vee z) = x \wedge y)$ holds. Equivalently, if $X$ is a nonempty finite subset of $L$ such that $x \wedge y = z$ for all $x \in X$, then $(\bigvee_{x \in X}x) \wedge y = z$. The lattice
is \emph{semidistributive} if it is both join semidistributive and meet semidistributive.

\begin{proposition}\label{unique x}
There exists a unique alcove $A_w$ in $P_{\mathcal{H}}$ such that the point ${x:= \bigcap\limits_{\alpha \in \Delta}H_{\alpha,1}}$ is a vertex of $A_w$. Moreover, for $\alpha \in \Delta$ the hyperplanes $H_{\alpha,1}$ are some of the walls of $A_w$.
\end{proposition}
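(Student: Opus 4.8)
The plan is to pin down the point $x$ explicitly and then read off both assertions from the local geometry of the arrangement $\mathcal H$ at that point. Writing $x=\sum_{i=1}^n c_i\omega_i$ in the basis of fundamental weights and using $(\alpha_i^{\vee},\omega_j)=\delta_{ij}$, the defining equations $(x,\alpha_i^{\vee})=1$ for $\alpha_i\in\Delta$ force $c_i=(x,\alpha_i^{\vee})=1$; hence $x=\sum_{i=1}^n\omega_i=\rho$, the top vertex of $P_{\mathcal H}$. So the whole statement concerns the alcove structure of $\mathcal H$ at $\rho$.

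I would first record that $\rho$ is a special point. For $\alpha\in\Phi$ write $\alpha^{\vee}=\sum_i m_i\alpha_i^{\vee}$ with $m_i\in\mathbb Z$; then $(\rho,\alpha^{\vee})=\sum_i m_i\in\mathbb Z$, so $H_{\alpha,(\rho,\alpha^{\vee})}$ passes through $\rho$ and is parallel to every $H_{\alpha,k}$. By Proposition \ref{prop special point}, $\rho$ is special, so $\text{Stab}_{W_a}(\rho)\cong W$. The same integrality shows that the translation $\tau_\rho$ preserves $\mathcal H$, since $\tau_\rho(H_{\alpha,k})=H_{\alpha,\,k+(\rho,\alpha^{\vee})}$, even though $\tau_\rho$ need not lie in $W_a$.

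For existence and for the claim on walls I would exhibit $A_w$ directly as $A_w:=\tau_\rho(-A_e)$. Since the map $y\mapsto-y$ preserves $\mathcal H$ and fixes $0$, the set $-A_e$ is an alcove having $0$ as a vertex; applying $\tau_\rho$ (which preserves $\mathcal H$) yields an alcove with $\rho=\tau_\rho(0)$ as a vertex. A direct check gives $-A_e=\{y:-1<(y,\alpha^{\vee})<0\ \forall\alpha\in\Phi^+\}$, so the points of $A_w$ satisfy $0<(y,\alpha_i^{\vee})<1$ for every simple root, i.e. $A_w\subset P_{\mathcal H}$. Moreover the walls of $-A_e$ through $0$ are the hyperplanes $H_{\alpha_i,0}$ for $i=1,\dots,n$, and $\tau_\rho$ sends each $H_{\alpha_i,0}$ to $H_{\alpha_i,1}$ because $(\rho,\alpha_i^{\vee})=1$; hence $H_{\alpha_i,1}$ is a wall of $A_w$ for every $\alpha_i\in\Delta$, which is the second assertion.

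It remains to prove uniqueness, and this is where the special-point structure does the real work. Because $\rho$ is special there is a neighbourhood $U$ of $\rho$ meeting only those hyperplanes of $\mathcal H$ that pass through $\rho$ (for each of the finitely many root directions the hyperplanes of that direction are equally spaced and $\rho$ lies on one of them, so a small enough $U$ avoids the others). Thus inside $U$ the arrangement is the central Coxeter arrangement of $\Phi$ based at $\rho$, and the alcoves having $\rho$ as a vertex are in bijection with the cones based at $\rho$ cut out by these hyperplanes, one alcove per cone. On the other hand, since $(\rho,\alpha_i^{\vee})=1$ for all $i$, the only relations of $P_{\mathcal H}=\{x:0\le(x,\alpha_i^{\vee})\le1\}$ that are active at $\rho$ are $(x,\alpha_i^{\vee})\le1$; hence near $\rho$ the polytope $P_{\mathcal H}$ coincides with the single cone $\{x:(x-\rho,\alpha_i^{\vee})\le0\ \forall i\}$. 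That cone therefore contains exactly one alcove with $\rho$ as a vertex, namely $A_w$, proving uniqueness. The main obstacle I anticipate is precisely the careful justification of this local model at $\rho$ — that a small neighbourhood sees only the hyperplanes through $\rho$ and that each cone then carries a single vertex-alcove — which is exactly what the characterisation of special points and the isomorphism $\text{Stab}_{W_a}(\rho)\cong W$ are there to supply.
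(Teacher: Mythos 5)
Your proof is correct, and it reaches the conclusion by a noticeably more explicit route than the paper. The paper never identifies $x$ concretely: it introduces the reflection subgroup $W_x=\langle s_{\alpha,1},\ \alpha\in\Delta\rangle$, verifies in the geometric representation on $\widehat V$ that $\Delta_x=\{\alpha^{\vee}-\delta\}$ is a simple system for it, and then argues that $W_x=\mathrm{Stab}_{W_a}(x)$ (via a hyperplane count and Proposition \ref{prop special point}), so that the cone $\mathcal D_x$ cut out by the $H_{\alpha,1}$ is a full chamber of the local arrangement and carries a unique alcove with vertex $x$. You instead compute $x=\rho=\sum_i\omega_i$, deduce that $x$ is special directly from the integrality of $(\rho,\alpha^{\vee})=h(\alpha^{\vee})$, and produce the alcove explicitly as $\tau_\rho(-A_e)$, which makes both the existence statement and the assertion about the walls $H_{\alpha_i,1}$ immediate; uniqueness then follows from the same local picture at the special point (the only active constraints of $P_{\mathcal H}$ at $\rho$ cut out a single chamber of the central arrangement there). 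The shared core is the special-point characterisation plus the ``one alcove per chamber at a special vertex'' principle; your version avoids the $\widehat V$/bilinear-form machinery and has the added benefit of exhibiting $A_w$ and its Shi coordinates $k(w,\alpha)=h(\alpha^{\vee})-1$ explicitly, while the paper's version identifies the stabiliser of $x$ concretely as the group generated by the $s_{\alpha,1}$. The one step you flag yourself --- that a small neighbourhood of $\rho$ meets only the hyperplanes through $\rho$ and that each resulting cone contains exactly one alcove with $\rho$ as a vertex --- is standard for special points and is also used implicitly in the paper, so it is not a gap.
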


\begin{proof}
Let $W_x:=\langle s_{\alpha,1}, \alpha \in \Delta \rangle$, $\Delta_x:= \{ \alpha^{\vee}-\delta~|~\alpha \in \Delta \}$ and $\Phi_x := W_x(\Delta_x)$. It suffices to show two things: First the set  $\Delta_x$ is a simple system of $W_x$ and secondly $W_x = \text{Stab}_{W_a}(x)$.
Indeed, let us denote  $\mathcal{D}_x$ to be the simplicial cone pointed in $x$, cut out by the hyperplanes $H_{\alpha,1}$ for $\alpha \in \Delta$ and containing the alcove $A_e$. If $\Delta_x$ is a simple system of $W_x$ then $\mathcal{D}_x$ is the fundamental Weyl chamber of $W_x$, and if $W_x=\text{Stab}_{W_a}(x)$ then there is no hyperplane going through $x$ and $\mathcal{D}_x$. Thus, by setting $A_w$ to be the alcove with vertex $x$ and the $n-1$  walls $H_{\alpha,1}$ for $\alpha \in \Delta$ we have what we announced.

$\bullet$) Since $\Delta$ is linearly independent it follows that $\Delta_x$ is also linearly independent.  Because of Equation (\ref{scalar}) we know that $(\alpha^{\vee}-\delta, \beta^{\vee}-\delta)= (\alpha^{\vee}, \beta^{\vee})$ for all $\alpha, \beta \in \Delta$. Then, using Formula (\ref{inner prod and B}) we have $B(\alpha^{\vee}-\delta, \beta^{\vee}-\delta)= B(\alpha^{\vee}, \beta^{\vee})$  for all $\alpha, \beta \in \Delta$. 
Therefore, $\Delta_x$ is a simple system (in the sense of Definition \ref{simple system})  for $(W_x,S_x)$ where $S_x := \{s_{\alpha,1}~|~\alpha \in \Delta\}$.

$\bullet$) First of all it is clear that $W_x \simeq W$. Therefore it follows that $|\Phi_x^+| = |\Phi^+|$ and then the number of hyperplanes passing through $x$ is the same as the number of hyperplanes passing through 0. Moreover, we know  that each hyperplane of $\mathcal{H}$  is parallel to a hyperplane passing through 0, that is parallel to a hyperplane $H_{\alpha,0}$ with $\alpha \in \Phi^+$. In particular each hyperplane passing through $x$ is parallel to such a hyperplane. Therefore, it follows that each hyperplane of $\mathcal{H}$ is parallel to a hyperplane passing through $x$. Thus, Proposition \ref{prop special point} implies that $x$ is a special point, that is $\text{Stab}_{W_x}(x) \simeq W$. It follows then that $W_x \simeq \text{Stab}_{W_a}(x)$. Finally, since $W$ is finite, $W_x$ is also finite and then, since $W_x \subset \text{Stab}_{W_a}(x)$,  it follows that $W_x =\text{Stab}_{W_a}(x)$.
\end{proof}

%\begin{remark}
%Using the general theory of Coxeter groups it is well known that $\text{Stab}_{W_a}(x)$ is a parabolic subgroup of $W_a$, that is $\text{Stab}_{W_a}(x) =gW_Ig^{-1}$ for some $g \in W_a$ and some $I \subset S_a$. Actually it is not difficult to see that $I = \{s \in S_a~|~(g(\alpha_s), x) = 0 \}$. Finally, it is possible to show that the $w$ in Proposition \ref{unique x} is the minimal coset representative of $gW_I$.
%\end{remark}

\begin{proposition}\label{max}
Let $w$ as defined in Proposition \ref{unique x}. The set $H^0(\widehat{X}_{W_a})$ has a unique maximal element which is $X_{W_a}[\lambda_w]$.
\end{proposition}

\begin{proof}
First of all, we identify $H^0(\widehat{X}_{W_a})$ with the elements of $\text{Alc}(P_{\mathcal{H}})$ since these two sets are in bijection thanks to Theorem \ref{TH central}. Let $x$ be as in Proposition \ref{unique x}. The proof is split in 4 parts.

\vspace{0.25cm}

\textbf{-} We begin by showing that if $\text{Alc}(P_{\mathcal{H}})$ has a maximal element $g$ (in the sense of Definition \ref{ordre}) then the alcove of this element must have $x$ as vertex. Assume that it is not that case, namely $A_{g} \cap \{x\} = \emptyset$. Then, since Proposition \ref{unique x} implies that there exists a unique element $w \in \text{Alc}(P_{\mathcal{H}})$ having $x$ as vertex, it follows that a wall of $A_w$ is supported by an hyperplane $H_{\theta,k}$ (with $\theta \in \Phi^+ \setminus \Delta$ and $k>0$) that separates $A_w$ and $A_g$ and such that $A_g$ and $A_e$ are on the same side of $H_{\theta,k}$. Therefore, $k(g, \theta) < k(w,\theta)$ which is impossible by definition of $\leq_S$.

\vspace{0.25cm}

\textbf{-} \textbf{Main claim}.  We claim that $w$ is strictly greater than any other element in $\text{Alc}(P_{\mathcal{H}})$. We proceed by contradiction by assuming that $w$ is not the greater element of $\text{Alc}(P_{\mathcal{H}})$. It follows that we have a hyperplane $H_{\alpha,k}$ with $\alpha \in \Phi^+\setminus \Delta,~k \in \mathbb{N}$ that cuts $P_{\mathcal{H}}$ into two connected components such that $A_w$ and $A_e$ are in the same one and such that $x \notin H_{\alpha,k}$.  
 Let $A_{w'}$ be an alcove in the connected component that does not contain $A_e$. It follows that 
 \begin{equation}\label{contradiction}
 k(w,\alpha) < k(w',\alpha)
 \end{equation}
 
\textbf{-} \textbf{Intermediary claim with its proof}. Let $y$ be a point of $A_{w}$ and $y'$ be a point of $A_{w'}$. Therefore, since $y$ and $y' \in P_{\mathcal{H}}$ there exist $a_1,\dots ,a_n$ and $b_1,\dots ,b_n \in \mathbb{R}^+$ such that $y=a_1\alpha_1 +\cdots+a_n\alpha_n$ and $y'=b_1\alpha_1 +\cdots+ b_n\alpha_n$. 
We claim now that without loss of generality one can assume that $b_i \leq a_i$ for all $i \in \llbracket 1,n\rrbracket$. Let us explain this claim. Write $y$ and $y'$ in the basis of fundamental weights: 
${y=c_1\omega_1 + \cdots +c_n\omega_n}$ and $y' = d_1\omega_1 + \cdots + d_n\omega_n$ with $c_i$ and $d_i \in \mathbb{R}^+$. Since $y \in A_w$ and $y' \notin A_w$, and since $x$ is a vertex of $A_w$, we can take $y$ as close as we want to $x$. It follows here that there is no problem of assuming that $d_i \leq c_i$ for all $i$. Therefore, we make this assumption. It turns out that the inverse of the Cartan matrix $C^{-1}=(h_{ij})_{i,j \in \llbracket 1,n\rrbracket}$ of $W$ is the change-of-basis matrix of the basis of simple roots to the basis of fundamental weights. Moreover, it is known (see \cite{LT} or \cite{YY}) that all the coefficients of  $C^{-1}$ are positive. It follows that 
$$C^{-1} \begin{pmatrix}
c_1\\
\vdots \\
c_n
\end{pmatrix}
 = \begin{pmatrix}
a_1\\
\vdots \\
a_n
\end{pmatrix}~~ \text{and}~~ C^{-1}\begin{pmatrix}
d_1\\
\vdots \\
d_n
\end{pmatrix} = \begin{pmatrix}
b_1\\
\vdots \\
b_n
\end{pmatrix}.$$

Thus, the $i$-th coordinate of $y$ is $\sum\limits_{k=1}^nh_{ik}c_k$ and the $i$-th coordinate of $y'$ is $\sum\limits_{k=1}^nh_{ik}d_k$. Since ${d_i \leq c_i}$ for all $i$ and since $h_{ik} \geq 0$ for all $k=1,\dots ,n$ it follows that $\sum\limits_{k=1}^nh_{ik}d_k \leq \sum\limits_{k=1}^nh_{ik}c_k$. However the $i$-th coordinate of $y$ is nothing but $a_i$, and $i$-th coordinate of $y'$ is
$b_i$. Finally we have shown that $b_i \leq a_i$ for all $i$, which proves the intermediary claim.

\vspace{0.25cm}

\textbf{-} \textbf{Conclusion of the main claim}. We are now able to conclude the first claim. Let $\lfloor~ \rfloor$ be the floor function. From the way that the coefficients $k(-,-)$ are defined, one has $k(w,\alpha) = \lfloor(\alpha^{\vee}, y) \rfloor$ and $k(w',\alpha) = \lfloor(\alpha^{\vee}, y') \rfloor$. Via the expressions of $y$ and $y'$ one has:
\begin{align*}
(\alpha^{\vee}, y) &= (\alpha^{\vee}, a_1\alpha_1 + \cdots + a_n\alpha_n) = a_1(\alpha^{\vee}, \alpha_1) +\cdots+ a_n(\alpha^{\vee}, \alpha_n),  \\
(\alpha^{\vee}, y') &= (\alpha^{\vee}, b_1\alpha_1 + \cdots + b_n\alpha_n) = b_1(\alpha^{\vee}, \alpha_1) +\cdots+ b_n(\alpha^{\vee}, \alpha_n).
\end{align*}

It follows that  $0 \leq (\alpha^{\vee}, y') \leq (\alpha^{\vee}, y)$ and then $\lfloor(\alpha^{\vee}, y')\rfloor \leq  \lfloor(\alpha^{\vee}, y)\rfloor$, which means that $k(w',\alpha) \leq k(w,\alpha)$. This contradicts (\ref{contradiction}). Hence, $w$ must be the maximal element of $\text{Alc}(P_{\mathcal{H}})$. 
\end{proof}

\begin{proposition}
 Let $w$ be the element of $\text{Alc}(P_{\mathcal{H}})$ as defined in Proposition \ref{unique x}. The map $X_{W_a}[\lambda] \mapsto w_{\lambda}$ defines a poset isomorphism between $H^0(\widehat{X}_{W_a})$ and the interval $[e,w]_R$ for the right weak order of $W_a$.
\end{proposition}

\begin{proof}
Once again Theorem \ref{TH central} tells us that this map is a bijection. Using Remark \ref{remark} we have 
$
X_{W_a}[\lambda] \leq_S X_{W_a}[\lambda'] \Longleftrightarrow \lambda \leq_S \lambda' \Longleftrightarrow w_{\lambda} \leq_R w_{\lambda'}
$.
This shows that this map is a morphism of posets. Moreover, this also shows that $\lambda, \lambda'$ define an interval for $\leq_S$ if and only if $w_{\lambda}$ and $w_{\lambda'}$ define an interval for the right weak order. Therefore, we only have to show that $[0, \lambda_w]_S$ is a well defined interval.
It is obvious that 0 is less than any other admitted vector and that it is the only one satisfying this condition. Thanks to Proposition \ref{max}, $H^0(\widehat{X}_{W_a})$ has a unique maximal element which is $X_{W_a}[\lambda_w]$ (identified with $\lambda_w$). It follows then that $[0, \lambda_w]_S$ does define an interval.
\end{proof}

We are now ready to prove the main theorem.

\begin{proof} [Proof of Theorem \ref{lattice SD}]

It suffices to show that $[e,w]_R$ is a semidistributive lattice. This stems from Theorem 8.1 of \cite{NRDS} that states that every interval in the right weak order of any infinite Coxeter group is a semidistributive lattice.
\end{proof}

\noindent \textbf{Acknowledgements}.
I thank Christophe Hohlweg, Christophe Reutenauer and Hugh Thomas for answering many questions and providing many helpful comments that help us to  improve this paper. I'm also grateful to Antoine Abram and Nicolas England for valuable discussions. Finally, I thank the referee for his/her thorough reading.

This work was partially supported by NSERC grants and by the LACIM.

\bibliographystyle{plain}
\bibliography{components_poset.bib}

\end{document}